\definecolor{ffmblue}{HTML}{006092}
\newcounter{mparcnt}
\declaretheorem[name=Theorem,numberwithin=section]{thm}
\declaretheorem[name=Remark,style=remark,sibling=thm]{rem}
\declaretheorem[name=Lemma,sibling=thm]{lemma}
\declaretheorem[name=Definition,style=definition,sibling=thm]{defn}
\declaretheorem[name=Corollary,sibling=thm]{cor}
\numberwithin{equation}{section}
\newcommand{\ti}{\tilde}
\newcommand{\bs}{\backslash}
\newcommand{\cn}{\colon}
\newcommand{\sub}{\subset}
\newcommand{\ov}{\overline}
\newcommand{\mr}{\mathring}
\newcommand{\bbR}{\mathbb{R}}
\newcommand{\bbS}{\mathbb{S}}
\newcommand{\bbH}{\mathbb{H}}
\newcommand{\8}{\infty}
\newcommand{\ga}{\gamma}
\newcommand{\de}{\delta}
\newcommand{\ep}{\epsilon}
\newcommand{\ka}{\kappa}
\newcommand{\la}{\lambda}
\newcommand{\om}{\omega}
\newcommand{\Om}{\Omega}
\newcommand{\Ga}{\Gamma}
\newcommand{\La}{\Lambda}
\newcommand{\cL}{\mathcal{L}}
\newcommand{\cO}{\mathcal{O}}
\newcommand{\del}{\partial}
\newcommand{\n}{\nabla}
\newcommand{\rt}{\sqrt}
\newcommand{\ip}[2]{\left\langle #1,#2 \right\rangle}
\newcommand{\fr}[2]{\frac{#1}{#2}}
\newcommand{\tfr}[2]{\tfrac{#1}{#2}}
\newcommand{\x}{\times}
\DeclareMathOperator{\id}{id}
\DeclareMathOperator{\dist}{dist}
\DeclareMathOperator{\vol}{vol}
\newcommand{\pf}[1]{\begin{proof}#1 \end{proof}}
\newcommand{\eq}[1]{\begin{equation}\begin{alignedat}{2} #1 \end{alignedat}\end{equation}}
\newcommand{\br}[1]{\left(#1\right)}
\newcommand{\abs}[1]{\lvert #1\rvert}
\newcommand{\enum}[1]{\begin{enumerate}[(i)] #1 \end{enumerate}}
\newcommand{\ra}{\rightarrow}
\newcommand{\hra}{\hookrightarrow}
\newcommand{\mrm}{\mathrm}
\newcommand{\hp}{\hphantom}
\newcommand{\q}{\quad}
\begin{document}

\title[Stability of quermassintegral inequalities in $\bbH^{n+1}$]{Stability of the quermassintegral inequalities in hyperbolic space}

\author{Prachi Sahjwani}
\address{\flushleft\parbox{\linewidth}{Cardiff University\\ School of Mathematics\\ Senghennydd Road\\ Cardiff CF24 4AG\\Wales\\ {\href{mailto:sahjwanip@cardiff.ac.uk}{sahjwanip@cardiff.ac.uk}} }}

\author{Julian Scheuer}
\address{\flushleft\parbox{\linewidth}{Goethe-Universit\"at\\ Institut f\"ur Mathematik\\ Robert-Mayer-Str.~10\\ 60325 Frankfurt\\ Germany\\ {\href{mailto:scheuer@math.uni-frankfurt.de}{scheuer@math.uni-frankfurt.de}}}}
\date{\today}
\keywords{Quermassintegral inequalities; Hyperbolic space; Curvature flow}
\thanks{This project was funded by a DTP Programme of EPSRC, Project Reference EP/T517951/1, and in particular within the sub-project ``Stability in physical systems governed by curvature quantities'', Project Reference 2601534.}

\begin{abstract}
For the quermassintegral inequalities of horospherically convex hypersurfaces in the $(n+1)$-dimensional hyperbolic space, where $n\geq 2$, we prove a stability estimate relating the Hausdorff distance to a geodesic sphere by the deficit in the quermassintegral inequality. The exponent of the deficit is explicitly given and does not depend on the dimension. The estimate is valid in the class of domains with upper and lower bound on the inradius and an upper bound on a curvature quotient. This is achieved by some new initial value independent curvature estimates for locally constrained flows of inverse type.
\end{abstract}

\maketitle

\section{Introduction}
The isoperimetric inequality is a fundamental result in geometry that relates the volume of a region in the Euclidean, or also in some non-flat spaces, to the surface area of its boundary. In the Euclidean setting, among all bounded domains $\Om\sub \bbR^{n+1}$, $n\geq 1$, there holds

\eq{\label{II}
    \br{\fr{\abs{\Om}}{\om_{n+1}}}^{\fr{n}{n+1}}\leq \fr{\abs{\del\Om}}{(n+1)\om_{n+1}}
}
   with equality only when $\Omega$ is a geodesic ball. Here $\om_{n+1}$ is the volume of the $(n+1)$-dimensional unit ball and $\abs{\cdot}$ stands for the Hausdorff measure of the appropriate dimension. Equality in this inequality is attained if and only if $\Om$ is a ball. Hence it is natural to investigate the stability question, namely how close is $\Omega$ to a geodesic ball, provided the deviation in \eqref{II} from the equality case is small. For the isoperimetric inequality this question has been addressed to great extent, e.g. \cite{FigalliMaggiPratelli:10/2010,FuscoMaggiPratelli:/2008,Ivaki:10/2014} and we are not attempting a more detailed overview here.

The quermassintegral inequalities are a generalisation of the isoperimetric inequality. They are a collection of geometric inequalities that interrelate the coefficients in the Steiner formula, which is the Taylor expansion of the volume of outer parallel bodies of a convex body $K\sub\bbR^{n+1}$,
\eq{
\vol(K+\rho B) = \sum_{m=0}^{n+1}\binom{n+1}{m}W_m(K)\rho^m,
}
see \cite[p.~208]{Schneider:/2014}. 

 In the Euclidean space the $W_{m}$ can be expressed as curvature integrals and the corresponding inequalities are written as follows:
\eq{\left(\int_{\partial \Omega} E_{m-1},\right)^{\frac{n-m}{n+1-m}}\leq C\int_{\partial \Omega} E_{m},
}
where $\Omega \subset \mathbb{R}^{n+1}$ is a convex bounded domain and $E_{m}$ is the (normalised) elementary degree $m$ symmetric polynomial of principal curvatures of $\partial \Omega $ as an embedding in $\mathbb{R}^{n+1}$. The convexity assumption was relaxed to $m$-convex and starshaped in \cite{GuanLi:08/2009}.
In the convex class, the stability for the inequalities has been thoroughly investigated for example in \cite{GroemerSchneider:/1991,Schneider:01/1989}, while in the non-convex case the only available result seems to be that of the second author \cite{Scheuer:03/2021}.
The purpose of this paper is the transfer of such investigations into the $(n+1)$-dimensional hyperbolic space, where the quermassintegral inequalities were proved by Wang/Xia for horospherically convex domains \cite[Thm.~1.1]{WangXia:07/2014}, by using a suitable curvature flow. They proved that if $\Omega$ is a bounded smooth $h$-convex (i.e. all principal curvatures are greater than $1$) domain in $\mathbb{H}^{n+1}$, then there holds 
\begin{equation} \label{AF}
W_{m}(\Omega) \geq f_{m}\circ f_{l}^{-1} (W_{l}(\Omega)),\q  0 \leq l < m \leq n.
\end{equation}
Equality holds if and only if $\Omega$ is a geodesic ball. Here $W_{m}$ is the $m^{th}$ quermassintegral in $\bbH^{n+1}$ (see \autoref{Prelim} for the definition), $f_{m}(r)=W_{m}(B_{r})$, and $f_{l}^{-1}$ is the inverse function of $f_{l}$. 
Hu/Li/Wei gave an alternative proof by using a different flow \cite{HuLiWei:04/2022}. We will review their method later, as we are going to use the same flow for our result. In this paper we study the stability of these inequalities in the hyperbolic space. In particular we prove the following result, which controls the Hausdorff distance of an $h$-convex hypersurface in $\bbH^{n+1}$ to a geodesic sphere by the deviation of the inequality \eqref{AF} from the equality case:

\begin{thm}\label{Stability AF}
Let $n\geq 2$, $\Omega \subset \mathbb{H}^{n+1}$ be an $h$-convex domain, and $1\leq m\leq n-1$. Then there exists a constant $C= C\left(n,\rho_{-}(\Om),\max_{\del\Om}E_{m}/E_{m-1}\right)$ and a geodesic sphere $S_{\bbH}$ such that
\eq{\label{eq:Stability AF}
\mathrm{dist}(\partial \Omega, S_{\mathbb{H}}) \leq C \left(W_{m+1}(\Omega)-f_{m+1}\circ f_{m}^{-1}\left(W_{m}(\Omega)\right)\right)^{\frac{1}{m+2}}.
}
\end{thm}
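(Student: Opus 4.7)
The plan is to evolve $\partial\Omega$ by the locally constrained inverse-type curvature flow of Hu--Li--Wei \cite{HuLiWei:04/2022},
\[
\partial_t X = \Bigl(\frac{\phi'(r)}{F} - u\Bigr)\nu,\qquad F=\frac{E_{m+1}}{E_m},\ \phi(r)=\cosh r,
\]
with $u=\langle \phi'\partial_r,\nu\rangle$ the generalised support function. For $h$-convex initial data this flow preserves $W_m$, is monotone non-increasing in $W_{m+1}$, preserves $h$-convexity, and converges smoothly to a geodesic sphere $S_\mathbb H$ of radius $r_\infty$ determined by $f_m(r_\infty)=W_m(\Omega)$. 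This $S_\mathbb H$ is the comparison sphere of the theorem, and the quermassintegral deficit equals the total dissipation
\[
W_{m+1}(\Omega)-f_{m+1}(r_\infty) = \int_0^\infty \cD(t)\,dt,\qquad \cD(t):=-\tfrac{d}{dt}W_{m+1}(\Omega_t)\geq 0.
\]

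The main obstacle, and the stated novelty of the paper, is to produce curvature estimates along this flow that are \emph{independent of the initial data} in the precise sense that they depend only on $n$, $\rho_-(\Omega)$, and $\max_{\partial\Omega}E_m/E_{m-1}$. Newton--Maclaurin together with $h$-convexity gives two-sided control on $F$ at $t=0$; to propagate an upper bound on $F$ in time without invoking initial $C^2$-smallness, one applies the maximum principle to a tailored auxiliary quotient of the form $\log F + \alpha u$, with $\alpha$ chosen to absorb the quadratic gradient terms in the evolution equation of $\log F$ and exploiting the lower bound on $u$ coming from the inradius bound. These time-uniform estimates yield a pointwise bound on the flow speed $\Phi:=\phi'/F-u$ and, via standard parabolic regularity for inverse curvature equations, upgrade to uniform $C^{1,\beta}$ control on the evolving hypersurfaces in terms of the admissible quantities only.

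Combined with these uniform estimates, a Bonnesen-type argument converts the scalar deficit into a Hausdorff-distance bound. Let $r_+\geq r_-$ denote the circumradius and inradius of $\Omega$ with respect to the centre of $S_\mathbb H$. Combining the hyperbolic Minkowski identity $\int_{\partial\Omega}(\phi' E_{m-1}-uE_m)=0$ with the uniform curvature bounds and a Taylor expansion of the quermassintegral functional about $B_{r_\infty}$ should produce
\[
W_{m+1}(\Omega)-f_{m+1}(r_\infty)\ \geq\ c\,(r_+-r_-)^{m+2},
\]
where $c$ depends only on the allowed quantities. Since $\mathrm{dist}(\partial\Omega,S_\mathbb H)\leq r_+-r_-$, inverting this bound delivers \eqref{eq:Stability AF} with exponent $1/(m+2)$; the exponent is the order of tangency of the quermassintegral functional to its value at a ball under radial perturbations, which is $m+2$ regardless of dimension. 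The hard part is unambiguously step two: without the initial-value-independent estimates the constant in the Bonnesen inequality would depend on higher-order initial data and the theorem as stated would fail.
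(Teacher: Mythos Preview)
Your proposal diverges from the paper in both setup and main argument, and contains genuine gaps. First, the flow in \cite{HuLiWei:04/2022} that preserves $W_m$ and decreases $W_{m+1}$ uses $F = E_m/E_{m-1}$, not $E_{m+1}/E_m$; this matters because the constant in the theorem is allowed to depend on $\max_{\partial\Omega} E_m/E_{m-1} = \max_{t=0} F$, and the paper's $F$-bound is simply $F \leq \max_{t=0} F$ from the maximum principle---no auxiliary function $\log F + \alpha u$ is needed or used. The genuinely new a priori estimate (\autoref{H-bound}) is instead $H \leq C/t$ for $t\leq 1$, which decays away from any initial value; you do not mention this, and it is precisely what drives the argument.

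The main gap is your Bonnesen-type step. The paper neither states nor proves any inequality of the form $W_{m+1}(\Omega) - f_{m+1}(r_\infty) \geq c(r_+ - r_-)^{m+2}$, and your justification for the exponent (``order of tangency under radial perturbations'') is incorrect: the deficit is quadratic in any small perturbation of a sphere, not of order $m+2$. The paper instead argues as follows. The deficit $\epsilon$ equals the total dissipation $\int_0^\infty \int_{M_t}\lambda'(E_m - E_{m+1}E_{m-1}/E_m)$, and an algebraic lemma \cite[Lemma~4.2]{Scheuer:03/2021} controls $|\mathring A|^2$ pointwise by $E_{m-1}$ times this integrand. Combined with $E_{m-1}\leq (H/n)^{m-1}\leq C t^{-(m-1)}$ on $[\delta,2\delta]$, one obtains $\int_\delta^{2\delta}\|\mathring A\|_{L^2(M_t)}^2\,dt \leq C\delta^{-(m-1)}\epsilon$, so some $t_\delta\in[\delta,2\delta]$ has $\|\mathring A\|_{L^2}\leq C\delta^{-m/2}\epsilon^{1/2}$. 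The flow speed is uniformly bounded, giving $\dist(M_{t_\delta},\partial\Omega)\leq C\delta$. Choosing $\delta=\epsilon^{1/(m+2)}$ balances these two contributions---this is where the exponent actually comes from---and the quantitative nearly-umbilical theorem of De~Rosa--Gioffr\`e \cite{De-RosaGioffre:/2019}, applied after the conformal change to the Euclidean ball, converts smallness of $\|\mathring A\|_{L^2}$ into Hausdorff closeness of $M_{t_\delta}$ to a geodesic sphere. The triangle inequality finishes.
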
     

Here $\rho_{-}(\Om)$ is the inradius of the domain $\Om$.
The dependence of $C$ on $\rho_{-}(\Om)$ means that we neither control $C$ when $\rho_{-}(\Om)$ tends to zero, nor when it tends to infinity.

\begin{rem}
\enum{
\item Note that the curvature dependence of $C$ does allow for curvature blowup in a certain sense. Namely, the quantity $E_{m}/E_{m-1}$ may remain bounded, even if $\abs{A}^{2}$ becomes unbounded, as can be seen from the example $n-1=m=2$, for which
\eq{\fr{E_{2}}{E_{1}} = c_{n}\fr{\ka_{1}\ka_{2} + \ka_{1}\ka_{3} + \ka_{2}\ka_{3}}{\ka_{1}+\ka_{2} + \ka_{3}}}
remains bounded, unless merely $\ka_{2}$ goes to infinity.
\item Also note that we do not assume $\del\Om$ to be nearly spherical, as it is done for example in the recent paper \cite{ZhouZhou:06/2023}, where the authors a priori assume $W^{2,\8}$ closeness to a sphere and obtain stability of the Fraenkel asymmetry.
}
\end{rem}

In particular, from the previous theorem we get an estimate in terms of $W_2$ and $W_1$ with exponent $1/3$, if we choose $m=1$ and impose a bound on the mean curvature $H = nE_1$. It turns out the under the same assumption we can extend this to arbitrary $m$ with the same exponent.

\begin{thm}\label{Stability AF-2}
Let $n\geq 2$, $\Omega \subset \mathbb{H}^{n+1}$ be an $h$-convex domain, and $1\leq m\leq n-1$. Then there exists a constant $C= C\left(n,\rho_{-}(\Om),\max_{\del\Om}H\right)$ and a geodesic sphere $S_{\bbH}$ such that
\eq{\label{eq:Stability AF-2}
\mathrm{dist}(\partial \Omega, S_{\mathbb{H}}) \leq C \left(W_{m+1}(\Omega)-f_{m+1}\circ f_{m}^{-1}\left(W_{m}(\Omega)\right)\right)^{\frac{1}{3}}.
}
\end{thm}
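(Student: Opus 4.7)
My plan is to run the same locally constrained, inverse-type flow of Hu--Li--Wei employed in the proof of Theorem~\ref{Stability AF}, but to equip it with new curvature estimates that depend only on an upper bound on the mean curvature. This flow preserves $W_m$ and monotonically decreases $W_{m+1}$ down to $f_{m+1}\circ f_m^{-1}(W_m(\Om))$, converging to a geodesic sphere $S_\bbH$ of hyperbolic radius $r_m:=f_m^{-1}(W_m(\Om))$. The total decrease of $W_{m+1}$ along the flow equals exactly the deficit $D_m:=W_{m+1}(\Om)-f_{m+1}\circ f_m^{-1}(W_m(\Om))$, and the Hausdorff distance between $\partial\Om$ and $S_\bbH$ is controlled by the integrated displacement $\int_0^\infty\|V\|_\infty\,dt$ of the flow speed $V$.

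The key link between the mean-curvature hypothesis and the flow estimates is the Newton--Maclaurin chain. Since $\partial\Om$ is $h$-convex, i.e.\ $\kappa_i\geq 1$, one has $E_k/E_{k-1}\leq E_1 = H/n$ for every $k\geq 1$. Because $h$-convexity is preserved by the Hu--Li--Wei flow, this inequality persists \emph{uniformly in time}, so the bound $\max_{\partial\Om}H\leq H_0$ controls every curvature quotient appearing in the flow estimates purely in terms of $n$, $\rho_-(\Om)$ and $H_0$. This is precisely the ``new initial value independent curvature estimates'' announced in the abstract: the dimension-dependent curvature bounds used in the proof of Theorem~\ref{Stability AF}, which involved $\max E_m/E_{m-1}$, can be upgraded to $H$-only bounds on the entire flow.

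With these uniform bounds in place, the exponent $\tfrac13$---which coincides with the $m=1$ case of Theorem~\ref{Stability AF}---arises from interpolating the flow speed against the monotone decay of $W_{m+1}$. Schematically one establishes a cubic estimate of the form $\|V\|_\infty^3\leq -C\,\partial_t W_{m+1}$ along the flow, so that H\"older's inequality in time gives
\[
\mathrm{dist}(\partial\Om,S_\bbH)\;\leq\;\int_0^\infty\|V\|_\infty\,dt\;\leq\;T^{2/3}\Bigl(\int_0^\infty\|V\|_\infty^3\,dt\Bigr)^{1/3}\;\leq\;CD_m^{1/3},
\]
using that the effective time to convergence $T$ is controlled by the data. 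The principal obstacle is the derivation of the initial-value-independent curvature estimates themselves: one must propagate the $H$-bound along the flow without relying on any a priori bound on $|A|^2$, by combining $h$-convexity with the algebraic Newton--Maclaurin reductions inside a maximum principle argument. Carrying out that step is precisely what allows the exponent to remain $\tfrac13$ independently of $m$.
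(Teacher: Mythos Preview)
Your opening strategy is right and matches the paper: run the Hu--Li--Wei flow, note that $F=E_m/E_{m-1}\leq H/n$ by Newton--Maclaurin so an initial bound on $H$ controls $\max_{M_0}F$, and hence (via the maximum-principle argument behind \autoref{H-bound}) $H$ stays bounded along the entire flow by a constant depending only on $n$, $\rho_-(\Om)$ and $\max_{\del\Om}H$. (A small quibble: preservation of $h$-convexity and the Newton--Maclaurin chain alone do \emph{not} propagate the $H$-bound; you still need the parabolic maximum-principle step.)

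The genuine gap is your third paragraph. The paper does \emph{not} derive the exponent $\tfrac13$ from a pointwise cubic inequality $\|V\|_\infty^3\leq -C\,\del_t W_{m+1}$ plus H\"older in time. No such estimate is established (and it is unlikely to hold: $-\del_t W_{m+1}\sim\int_{M_t}\la'(E_m-E_{m-1}E_{m+1}/E_m)$ is an $L^2$-type control on $\mr A$, whereas $\|V\|_\infty$ is a sup-norm quantity involving the nonlocal support function $u$); moreover the flow exists for infinite time, so the factor $T^{2/3}$ has no a~priori meaning. What the paper actually does is reuse the proof of \autoref{Stability AF} verbatim up to \eqref{pf:Stability AF-3}, then, instead of estimating $\max_{[\de,2\de]}E_{m-1}$ via \autoref{H-bound} (which produced the bad $\de^{-(m-1)}$), simply absorb $\max E_{m-1}\leq (H/n)^{m-1}$ into the constant $C$ now allowed to depend on $\max_{\del\Om}H$. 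This yields $\int_\de^{2\de}\int_{M_t}|\mr A|^2\leq C\ep$ with no $\de$-power, so by pigeonhole there is $t_\de\in[\de,2\de]$ with $\|\mr A\|_{L^2(M_{t_\de})}\leq C\de^{-1/2}\ep^{1/2}$. Combining with $\dist(M_{t_\de},\del\Om)\leq C\de$ and the De~Rosa--Gioffr\`e nearly-umbilical estimate (which converts small $\|\mr A\|_{L^2}$ into Hausdorff closeness to a sphere), one optimises $\de=\ep^{1/3}$ to obtain \eqref{eq:Stability AF-2}. Your proposal omits the De~Rosa--Gioffr\`e step entirely; without it, there is no route from the integral decay of $W_{m+1}$ to a Hausdorff-distance bound.
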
  

The idea of the proof combines two major inputs drawn from different directions. The first one, which is also deeply involved in the actual proof of the quermassintegral inequalities \eqref{AF}, is the use of a suitable curvature flow to be defined later, which preserves $W_{m}(\Om)$ and decreases $W_{m+1}(\Om)$. The flow exists for all times and converges to a geodesic sphere. This proves the inequality. To characterize the equality case, it is observed that $W_{m+1}(\Om)$ is only strictly decreasing, when the traceless second fundamental form is not zero. For the proof of \eqref{AF} this was sufficient, but for the proof of \eqref{eq:Stability AF} we will make this quantitative and obtain an estimate on the traceless second fundamental form. The second input is  an estimate relating the Hausdorff distance to a geodesic sphere with the traceless second fundamental form. Such an estimate, in the form in which we need it, is due to De-Rosa/Gioffr\'e \cite{De-RosaGioffre:/2019}. Combination of these two ingredients will complete the proof.

After reviewing preliminaries in \autoref{Prelim}, we prove new a priori estimates for the locally constrained flow of $h$-convex hypersurfaces in \autoref{sec:estimates}, which are of independent interest. In \autoref{sec:proof} we complete the proof.

\subsection*{Acknowledgments}
We would like to thank Federica Dragoni and Nicolas Dirr for their support and encouragement.

\section{Preliminaries} \label{Prelim}
To study the curvature flow which is used to prove the quermassintegral inequality and their stability, it is useful to view the pointed hyperbolic space $\mathbb{H}^{n+1}$ as the warped product manifold, coming from polar coordinates around a given origin $o$, 
\eq{
\mathbb{H}^{n+1}\bs\{o\}= (0,\infty) \times \mathbb{S}^{n},
}
equipped with the metric
\eq{
\Bar{g} =dr^{2} +\lambda^{2}(r)g_{\mathbb{S}^{n}},
}
where $\lambda(r)= \sinh(r)$ and $g_{\mathbb{S}^{n}}$ is the standard round metric on the $n$-dimensional unit sphere.
We will also occasionally write $\ip{\cdot}{\cdot}$ for $\bar g$. In this paper, $\mrm{d}_{\bbH^{n+1}}$ will always denote the geodesic distance of two points in hyperbolic space, while 
\eq{\dist(K,L) = \inf\{\de>0\cn K\sub B_{\de}(L)~\wedge~L\sub B_{\de}(K)\}
} denotes the Hausdorff distance of two compact sets.

The vector field $\lambda \partial_{r}$ on $\mathbb{H}^{n+1}$ is a conformal Killing field, i.e. 
\eq{\overline{\nabla}(\lambda \partial_{r})= \lambda'\bar{g},}
where $\ov \n$ is the Levi-Civita connection of $\bar g$.

Let $M$ be a smooth closed hypersurface in $\mathbb{H}^{n+1}$ with outward unit normal $\nu$, 
then we define the support function of the hypersurface by
\eq{
u=\langle \lambda(r)\partial_{r}, \nu \rangle.
}
Writing $(g_{ij})$ for the metric induced on $M$ with inverse $(g^{ij})$ and Levi-Civita connection $\n$, $h_{ij}$ the second fundamental form and $A=(h^{i}_{j}) = (g^{ik}h_{kj})$ the Weingarten operator, we have the following equation, which follows from the conformal Killing property and the Weingarten equation:

\eq{\label{support equation}
    \nabla_{i}u = \langle \la\del_{r}, e_{k}\rangle h_{i}^{k}, 
}
where ${e_{1}, \cdots, e_{n} }$ is a basis of the tangent space of $M$.

Using the change of variables,
\eq{
    r= \log(2+\rho)-\log(2-\rho), \hspace{4mm} \rho \in (-2,2),
}
we obtain
\eq{\label{eq:conf}
    \bar g= e^{2\phi} \left(d\rho^{2}+\rho^{2}g_{\mathbb{S}^{n}}\right)\equiv e^{2\phi}\ti g,
}
where 
\eq{   e^{2\phi}= \frac{16}{(4-\rho^{2})^{2}}.
}
As a result, the hyperbolic space can now be viewed as a conformally flat space. We will need a simple lemma about the surface area of a submanifold of $\bbH^{n+1}$, when viewed as a Euclidean submanifold.

\begin{lemma}\label{area-conf}
Let $(M,g)$ be the embedding of a compact smooth manifold $M$ into $\bbH^{n+1}$ with 
\eq{\max_{M}r\leq \La_{0}.}
Then the Euclidean conformal image $\ti M$ in $B_{2}(0)$ as in \eqref{eq:conf} satisfies
\eq{\fr{1}{C}\abs{\ti M}\leq \abs{M}\leq C\abs{\ti M}, }
with $C = C(\La_{0})$.
\end{lemma}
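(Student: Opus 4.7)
The proof is a direct consequence of the conformal relation $\bar g=e^{2\phi}\tilde g$ from \eqref{eq:conf}. My plan is to track the conformal factor through the induced volume form and then use the height bound $\max_{M} r\le \Lambda_{0}$ to bound $\phi$ uniformly.

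First I would observe that for an $n$-dimensional submanifold, a conformal change of the ambient metric $\bar g=e^{2\phi}\tilde g$ rescales the induced metric on $M$ by the same pointwise factor, so the induced volume elements satisfy
\eq{
d\mu_{g}=e^{n\phi}\,d\mu_{\tilde g}.
}
Thus $|M|=\int_{\tilde M}e^{n\phi}\,d\mu_{\tilde g}$, and the task reduces to showing that $e^{n\phi}$ is bounded above and below by positive constants depending only on $\Lambda_{0}$.

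Next I would use the change of variables $r=\log(2+\rho)-\log(2-\rho)$ to translate the assumption $r\le\Lambda_{0}$ into a bound on $\rho$. Solving for $\rho$ gives $\rho=2(e^{r}-1)/(e^{r}+1)$, so $r\le\Lambda_{0}$ implies
\eq{
\rho\le 2\,\frac{e^{\Lambda_{0}}-1}{e^{\Lambda_{0}}+1}<2,\qquad 4-\rho^{2}\ge c(\Lambda_{0})>0.
}
Combined with the trivial upper bound $4-\rho^{2}\le 4$, this shows
\eq{
1\le e^{2\phi}=\frac{16}{(4-\rho^{2})^{2}}\le \frac{16}{c(\Lambda_{0})^{2}},
}
so $1\le e^{n\phi}\le C(\Lambda_{0})$ on $\tilde M$ (and on $M$).

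Integrating the identity $d\mu_{g}=e^{n\phi}\,d\mu_{\tilde g}$ over $\tilde M$ and using these two-sided bounds yields
\eq{
|\tilde M|\le |M|\le C(\Lambda_{0})\,|\tilde M|,
}
which is the claim (after relabeling the constant). There is no real obstacle here; the only point of caution is to make sure that one uses the induced volume forms and not the ambient ones, and that the lower bound on $e^{\phi}$ is the trivial one valid on all of $B_{2}(0)$ while the upper bound requires the height control to keep $\rho$ away from the conformal boundary $\{\rho=2\}$.
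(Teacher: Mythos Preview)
Your proof is correct and follows exactly the same approach as the paper: both reduce to the identity $\abs{M}=\int_{\ti M}e^{n\phi}$ coming from $d\mu_{g}=e^{n\phi}\,d\mu_{\ti g}$, and then use the height bound to control $e^{n\phi}$. The paper's proof is terser, writing only the volume identity and leaving the two-sided bound on $e^{n\phi}$ implicit, whereas you have spelled out the inversion $\rho=2(e^{r}-1)/(e^{r}+1)$ and the resulting bounds on $4-\rho^{2}$ explicitly.
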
 

\pf{
We have with some local parametrisation $X\cn U\ra M$, 
\eq{\abs{M} = \int_{U}\rt{\det g_{ij}} = \int_{U}e^{n\phi}\rt{\det \ti g_{ij}} = \int_{\ti M}e^{n\phi}}

}

The notion of {\it{convexity by horospheres}} or short {\it $h$-convexity}, is crucial for our result:
\begin{defn}
A smooth bounded domain $\Omega \subseteq \mathbb{H}^{n+1}$ is said to be $h$-convex, if the principal curvatures of the boundary $\partial \Omega$ satisfy $\kappa_{i} \geq 1$ for all $i=1,\cdots,n$. Then we also call $\del\Om$ $h$-convex.
\end{defn}

Such $h$-convex domains already enjoy a quite rigid geometry, and several of their geometric quantities are already controlled by the inradius: Let $\rho_{-}(\Om)$ be the inradius of $\Om$, i.e. the largest number, such that a ball of radius equal to that number fits into $\Omega$. Let $o$ be the center of that ball. In \cite[Thm.~1]{BorisenkoMiquel:/1999} it is shown that then 
\eq{\label{C0}\max_{\del \Om}r=\max_{x\in \del\Om}\mrm{d}_{\bbH^{n+1}}(o,x)\leq \rho_{-}(\Om) + \log 2.}
Furthermore one can extract an estimate on the support function. Due to \eqref{support equation}, where $u$ attains a minimum, $\n r$ must be zero, since $A$ is invertible. However, $\min_{\del\Om}r = \rho_{-}(\Om)$
and hence
\eq{\label{C1}\min_{\del\Om} u = \min_{\del\Om}\la(r) = \la(\rho_{-}(\Om)).}
The $h$-convexity of a hypersurface of $\bbH^{n+1}$ translates into convexity of the conformal image:

\begin{lemma}\label{convex-conf}
Let $(M,g)$ be an $h$-convex hypersurface of $\bbH^{n+1}$. Then its conformal Euclidean image $\ti M$ in $B_{2}(0)$ as in \eqref{eq:conf} is convex.
\end{lemma}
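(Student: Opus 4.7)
My plan is to use the standard formula for how the second fundamental form of a hypersurface transforms under a conformal change of the ambient metric, and then verify by direct calculation that the hyperbolic condition $\kappa_i \geq 1$ forces the Euclidean principal curvatures $\tilde\kappa_i$ to be (strictly) positive. Since the conformal factor $e^{2\phi} = 16/(4-\rho^2)^2$ is radial and positive, the outward unit normal to $M$ in $\bar g$ and the outward unit normal to $\tilde M$ in $\tilde g$ differ only by the scalar $e^\phi$, i.e.\ $\nu = e^{-\phi}\tilde\nu$; in particular the induced orientation on $\tilde M \subset B_2(0)$ is the Euclidean outward one, so convexity of $\tilde M$ is the statement $\tilde\kappa_i \geq 0$ for all $i$.

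The transformation formula gives, for a conformal change $\bar g = e^{2\phi}\tilde g$, the relation between the Weingarten operators
\eq{
\kappa_i = e^{-\phi}\bigl(\tilde\kappa_i + \tilde\nu(\phi)\bigr),
}
which I will record first. Next I compute $\tilde\nabla \phi$ explicitly: from $\phi = \log 4 - \log(4-\rho^2)$ in the flat polar coordinates of \eqref{eq:conf} we get $\tilde\nabla \phi = \tfrac{2}{4-\rho^2}\tilde x$, where $\tilde x$ denotes the Euclidean position vector in $B_2(0)$. Therefore $\tilde\nu(\phi) = \tfrac{2}{4-\rho^2}\langle \tilde x,\tilde\nu\rangle_{\tilde g}$, and the hypothesis $\kappa_i \geq 1$ rearranges, using $e^{-\phi} = (4-\rho^2)/4$, to
\eq{
\tilde\kappa_i \;\geq\; \frac{4 - 2\langle \tilde x,\tilde\nu\rangle_{\tilde g}}{4-\rho^2}.
}
To conclude convexity it suffices to show the right-hand side is nonnegative. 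This is immediate from Cauchy--Schwarz: since $\tilde M \subset B_2(0)$ we have $|\tilde x| = \rho < 2$, hence $\langle \tilde x,\tilde\nu\rangle_{\tilde g} < 2$, so the numerator is strictly positive. Thus $\tilde\kappa_i > 0$ for every $i$ at every point of $\tilde M$, proving (strict) convexity.

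The only genuine point to be careful about is the sign/orientation convention entering the conformal transformation formula. I would spell out explicitly that both $h_{ij}$ in $\bar g$ and $\tilde h_{ij}$ in $\tilde g$ are computed with outward-pointing unit normals (which, as noted above, are parallel positive scalar multiples of each other), so that the $+\tilde\nu(\phi)$ term appears with the correct sign; beyond this bookkeeping the argument is purely algebraic and does not require any of the flow machinery from \autoref{sec:estimates}.
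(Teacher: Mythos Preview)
Your proof is correct and follows essentially the same route as the paper: both use the conformal transformation formula for the Weingarten operator (the paper cites \cite[Equ.~(1.1.51)]{Gerhardt:/2006} for $e^{\phi}h^{i}_{j} = \tilde h^{i}_{j} + d\phi(\tilde\nu)\delta^{i}_{j}$, which is exactly your relation $\kappa_i = e^{-\phi}(\tilde\kappa_i + \tilde\nu(\phi))$), compute $d\phi = \tfrac{2\rho}{4-\rho^2}d\rho$, and then bound $d\phi(\tilde\nu)$ via $|d\rho(\tilde\nu)|\leq 1$ (equivalently your Cauchy--Schwarz step $\langle \tilde x,\tilde\nu\rangle_{\tilde g}\leq \rho$). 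The only cosmetic difference is that the paper carries the estimate one step further to the explicit lower bound $\tilde h^{i}_{j}\geq \tfrac{2}{2+\rho}\delta^{i}_{j}$, whereas you stop at positivity; for the lemma as stated either suffices.
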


\pf{We have
\eq{\label{eq:conf-A}e^{\phi}h^{i}_{j} = \ti h^{i}_{j} + d\phi(\ti \nu)\de^{i}_{j},}
see \cite[Equ.~(1.1.51)]{Gerhardt:/2006}. There holds
\eq{\phi = \log e^{\phi} = \log 4 - \log(4-\rho^{2})  }
and hence
\eq{d\phi = \fr{2\rho}{4-\rho^{2}}d\rho,}
which implies
\eq{\ti h^{i}_{j} \geq  \fr{4}{4-\rho^{2}}h^{i}_{j} - \fr{2\rho}{4-\rho^{2}}\de^{i}_{j}\geq \fr{4-2\rho}{4-\rho^{2}}\de^{i}_{j} = \fr{2}{2+\rho}\de^{i}_{j}.}
Hence the second fundamental form is positive definite.
}

Now we define the hyperbolic quermassintegrals.
For any smooth body $\Omega$ in the hyperbolic space $\mathbb{H}^{n+1}$ with boundary $M= \partial \Omega$, the $k^{th}$ quermassintegrals $W_{k}$ is defined inductively as follows:
\eq{
W_{k+1}(\Omega)= \frac{1}{n+1}\int_{M}E_{k}(\kappa)d\mu - \frac{k}{n+2-k}W_{k-1}(\Omega), \hspace{4mm} k=1, \cdots, n-1,}
where
\eq{W_{0}(\Omega) = \abs{\Om},\q
W_{1}(\Omega)= \frac{1}{n+1}\lvert M\rvert.
}
Here $E_{k}$ is the normalized elementary symmetric polynomial in $n$-variables $\kappa= (\kappa_{1}, \cdots, \kappa_{n})$, 
\eq{E_{k}(\kappa)= \frac{1}{\binom{n}{k}}\sum_{1 \leq i_{1}\leq \cdots\leq i_{k} \leq n} \kappa_{i_{1}}\cdots\kappa_{i_{k}}.
}

In this paper, we use the curvature functions
\eq{F(\ka_{i}) = \fr{E_{m}}{E_{m-1}},\q 1\leq m\leq n-1. }
For us, only the properties on the positive cone $\Ga_{+}\sub\bbR^{n}$ matter, where these functions are monotone, i.e.
\eq{\fr{\del F}{\del\ka_{i}}>0}
and concave. We may also understand these functions as being defined on the Weingarten operator, or on the second fundamental form and the metric,
\eq{F = F(\ka) = F(h^{i}_{j}) = F(g,h).}
Then we write
\eq{F^{ij} = \fr{\del F}{\del h_{ij}}}
and there holds
\eq{F^{i}_{j} = \fr{\del F}{\del h^{j}_{i}} = g_{kj}F^{ik}. }
We refer to \cite[Ch.~2]{Gerhardt:/2006} for a thorough treatment.

\section{New a priori estimates for the locally constrained flow}\label{sec:estimates}

Wang/Xia \cite{WangXia:07/2014} proved the quermassintegral inequalities \eqref{AF} in the hyperbolic space by using the following flow:
Let $M_{{0}}= \partial \Omega$ be a smooth, $h$-convex hypersurface in $\mathbb{H}^{{n+1}}$ with 
\eq{
X_{0}\cn \bbS^{n}\ra M_{0} \hra \mathbb{H}^{n+1}.
}
Then the flow is defined as
\eq{
X\cn \bbS^{n} \times [0,\8) &\to \mathbb{H}^{n+1}\\
\frac{\partial }{\partial t}X(\xi,t)&= \left(c(t)- \left(\frac{E_{k}}{E_{l}}\right)^{\frac{1}{k-l}}(x,t)\right) \nu(\xi,t)\\
			X(\cdot,0)&=X_{0},
}
where $\nu$ is the outward normal to the hypersurface, and $c(t)$ is chosen such that the $l^{th}$ quermassintegral is preserved under this flow. 

The same inequality \eqref{AF} was proved by Hu/Li/Wei \cite{HuLiWei:04/2022} where they used a different flow:
\begin{equation} \label{flow}
\begin{split}
\frac{\partial }{\partial t}X(\xi,t) &= \left(\frac{ \lambda'(r)}{F}-u\right)\nu(\xi,t)\\
X(\cdot, 0) &= X_{0},
\end{split}
\end{equation}
with the notation from \autoref{Prelim}.  This flow preserves the $m^{th}$ quermassintegral $W_{m}(\Omega_{t})$ and decreases $W_{m+1}(\Omega_{t})$ monotonically.

We will quantify the proofs from \cite{GuanLi:/2021} and \cite{HuLiWei:04/2022} and employ the flow \eqref{flow} to extract information on the size of the traceless second fundamental form. To exploit this further, we will use the result from De Rosa/Gioffr{\`e}'s paper \cite{De-RosaGioffre:/2019}.  The closeness of the hypersurface to a geodesic sphere can be controlled by the $L^{p}$ norm of the traceless second fundamental form $\mathring{A}$, whenever $\mathring{A}$ is small. Their result is only for the Euclidean space, however we note that up to a term coming from the conformal factor, the traceless second fundamental form is conformally invariant, and hence the umbilicity in the Euclidean and the hyperbolic space are comparable. We will point out the necessary details whenever appropriate. We will also need some refined curvature estimates, which do not depend on their initial values. Therefore we require some evolution equations and additional a priori estimates, which we develop in the sequel.

It is known that the flow \eqref{flow} has arbitrary spheres as barriers, i.e.
 for all $(t,\xi)\in [0,\8)\x \bbS^{n}$ there holds due to \eqref{C0}, 
   \eq{\label{eq:bound-1}\rho_{-}(\Om) = \min_{\del\Om}r \leq r(\xi,t)\leq \max_{\del\Om}r\leq \rho_{-}(\Om) + \log 2.}
   Since the flow preserves the $h$-convexity, we also obtain a uniform $C^{1}$-bound via
   \eq{\label{eq:flow-C1}\la(\rho_{-}(\Om))\leq u(\xi,t)\leq \la(r(\xi),t)\leq \la(\rho_{-}(\Om)+\log 2)\leq e^{\rho_{-}(\Om)}.}

          Let us define the operator
\eq{\label{differential operator}
\mathcal{L}= \partial_{t}- \frac{\lambda'}{F^{2}}F^{ij}\n^{2}_{ij}- \langle \lambda\partial_{r}, \nabla^{(\cdot)} \rangle.
}

\begin{lemma}\label{lemma:evol-g-h}
Along the flow \eqref{flow}, the induced metric $g= (g_{ij})$ and second fundamental form $(h_{ij})$ satisfy the following equations, see \cite[Lemma~3.1]{HuLiWei:04/2022}
\eq{\label{evolution_g}
   \del_{t}g_{ij} = 2\left(\frac{\lambda'(r)}{F}- u\right)h_{ij};
}
\eq{\label{evolution h_{ij}}
      \cL h_{ij}&= \frac{\lambda'}{F^{2}} F^{kl,pq}\nabla_{i}h_{kl}\nabla_{j}h_{pq}-\left(\frac{\lambda'}{F}+u\right)g_{ij}-2u(h^{2})_{ij} \\
      	 &\hp{=}+\frac{1}{F}\left(\nabla_{j}F\nabla_{i}(\frac{\lambda'}{F})+\nabla_{i}F\nabla_{j}(\frac{\lambda'}{F}) \right)\\
      &\hp{=}+ \left(\frac{u}{F}+\lambda' +\frac{\lambda'}{F^{2}}F^{kl}(h_{rk}h^{r}_{l}+g_{kl})\right)h_{ij} .
}
\end{lemma}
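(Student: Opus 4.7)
The plan is to apply the general evolution equations for a normal flow $\del_{t}X=\Phi\nu$ in an ambient manifold of constant sectional curvature $-1$, specialised to $\Phi:=\la'/F-u$. For the metric, differentiating $g_{ij}=\ip{\del_{i}X}{\del_{j}X}$ and using the Weingarten relation gives the universal identity $\del_{t}g_{ij}=2\Phi h_{ij}$, which becomes \eqref{evolution_g} on substituting $\Phi$. For the second fundamental form I would start from the standard formula
\eq{\del_{t}h_{ij}=-\n_{i}\n_{j}\Phi+\Phi (h^{2})_{ij}-\Phi\bar R(\nu,e_{i},\nu,e_{j}),}
and exploit $\bar R(\nu,e_{i},\nu,e_{j})=-g_{ij}$ in $\bbH^{n+1}$, which turns the ambient curvature contribution into $+\Phi g_{ij}$. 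All the work then lies in computing the tangential Hessian of $\Phi=\la'/F-u$.

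For the support function, a second differentiation of \eqref{support equation} together with Codazzi (which holds in space forms) and the conformal Killing identity $\ov\n(\la\del_{r})=\la'\bar g$ delivers
\eq{\n_{i}\n_{j}u=\la' h_{ij}-u(h^{2})_{ij}+\ip{\la\del_{r}}{e^{k}}\n_{k}h_{ij},}
whose first-order piece is precisely the drift $\ip{\la\del_{r}}{\n^{(\cdot)}}$ absorbed by $\cL$. For $\la'=\cosh r$, the warped-product Hessian formula combined with $\la''=\la$ yields $\ov\n^{2}\la'=\la'\bar g$, whose restriction to $M$ via Gauss reads $\n_{i}\n_{j}\la'=\la' g_{ij}+uh_{ij}$. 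For $1/F$ I would expand via the chain rule,
\eq{\n_{i}\n_{j}(1/F)=2F^{-3}\n_{i}F\n_{j}F-F^{-2}\br{F^{kl,pq}\n_{i}h_{kl}\n_{j}h_{pq}+F^{kl}\n_{i}\n_{j}h_{kl}},}
and then rewrite $F^{kl}\n_{i}\n_{j}h_{kl}$ as $F^{kl}\n_{k}\n_{l}h_{ij}$ plus curvature corrections, using Codazzi together with the Ricci identity. Plugging in the Gauss equation $R_{ijkl}=h_{ik}h_{jl}-h_{il}h_{jk}-(g_{ik}g_{jl}-g_{il}g_{jk})$ of $M\sub\bbH^{n+1}$ produces precisely the $\tfrac{\la'}{F^{2}}F^{kl}(h_{rk}h^{r}_{l}+g_{kl})h_{ij}$ contribution in \eqref{evolution h_{ij}}, while the $\tfrac{\la'}{F^{2}}F^{kl}\n_{k}\n_{l}h_{ij}$ piece is absorbed by $\cL$.

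Assembling everything, the mixed gradient contributions $\n_{i}\la'\n_{j}(1/F)+\n_{j}\la'\n_{i}(1/F)$ recombine with the $-2\la' F^{-3}\n_{i}F\n_{j}F$ piece of $\la'\n^{2}(1/F)$ into $\tfrac{1}{F}(\n_{j}F\n_{i}(\la'/F)+\n_{i}F\n_{j}(\la'/F))$, and the remaining zeroth-order terms arrange themselves into the stated right hand side. The main obstacle is purely computational bookkeeping: tracking signs from the splitting $\Phi=\la'/F-u$, performing the product rule on $\la'/F$, and keeping careful accounts of which Codazzi--Ricci commutation generates which curvature correction. No deep new input is needed beyond (i) the standard flow evolution formula in a space form, (ii) Simons-type commutation of $\n^{2}h$, and (iii) the explicit conformal Killing identity for $\la\del_{r}$.
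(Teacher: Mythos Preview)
The paper does not actually prove this lemma; it simply cites \cite[Lemma~3.1]{HuLiWei:04/2022} and states the result. Your outline is precisely the standard derivation that underlies that citation: the universal normal-variation formulae for $g_{ij}$ and $h_{ij}$ in a space form, combined with the explicit Hessians of $u$, $\la'$ and $1/F$ and a Simons-type commutation. So your approach is correct and coincides with what the referenced proof does.

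One small point of bookkeeping to watch: with the paper's conventions (outward normal, Weingarten $\ov\n_{i}\nu=h_{i}^{k}e_{k}$, hence Gauss formula $\ov\n_{i}e_{j}=\n_{i}e_{j}-h_{ij}\nu$), the restriction of $\ov\n^{2}\la'=\la'\bar g$ reads
\eq{\n_{i}\n_{j}\la' = \la' g_{ij} - u\,h_{ij},}
with a minus sign rather than the plus you wrote. This sign is exactly what is needed for the $u/F$ term in the coefficient of $h_{ij}$ on the right of \eqref{evolution h_{ij}} to come out correctly, so it is worth tracking carefully when you carry out the assembly.
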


%

\begin{lemma}
The curvature function $F$ satisfies
\eq{\cL F & =(1-F^{ij}g_{ij})u+\fr{\lambda'}{F}(F^{2} - F^{ij}(h^{2})_{ij}) +\frac{2}{F}F^{ij}\nabla_{i}F\nabla_{j}\br{\frac{\lambda'}{F}}.
 }
\end{lemma}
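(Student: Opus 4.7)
The plan is to differentiate $F$ along the flow and match the result against the already-known evolution of the second fundamental form. Viewing $F$ as a smooth symmetric function of the Weingarten operator $h^{i}_{j}=g^{ik}h_{kj}$, the chain rule together with \eqref{evolution_g} gives
\eq{\del_{t}F = F^{ij}\del_{t}h_{ij} - 2\br{\fr{\la'}{F}-u}F^{ij}(h^{2})_{ij},}
the second contribution coming from varying $g^{ik}$ in the Weingarten operator.

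For the spatial part of $\cL$ I would use the two standard identities
\eq{\n^{2}_{ab}F = F^{ij,kl}\n_{a}h_{ij}\n_{b}h_{kl} + F^{ij}\n^{2}_{ab}h_{ij},\q \ip{\la\del_{r}}{\n F} = F^{ij}\ip{\la\del_{r}}{\n h_{ij}}.}
Combining these with the formula for $\del_{t}F$ above one obtains the master relation
\eq{\cL F = F^{ij}\cL h_{ij} - 2\br{\fr{\la'}{F}-u}F^{ij}(h^{2})_{ij} - \fr{\la'}{F^{2}}F^{ab}F^{ij,kl}\n_{a}h_{ij}\n_{b}h_{kl}.}
Now I would plug in the evolution equation \eqref{evolution h_{ij}}. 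The key observation is that the concavity term $\fr{\la'}{F^{2}}F^{ij}F^{kl,pq}\n_{i}h_{kl}\n_{j}h_{pq}$ arising from contracting \eqref{evolution h_{ij}} with $F^{ij}$ is, after relabelling dummy indices, exactly the last term above and hence cancels. This cancellation is the crux of the computation.

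What remains is routine bookkeeping. Homogeneity of degree one, $F^{ij}h_{ij}=F$, collapses the $h_{ij}$-contractions on the last line of \eqref{evolution h_{ij}}, after which collecting the coefficients in front of $u$, $\la'$, $F^{ij}g_{ij}$ and $F^{ij}(h^{2})_{ij}$ reassembles precisely the three summands in the claim, with the two gradient pieces combining into $\tfrac{2}{F}F^{ij}\n_{i}F\,\n_{j}(\la'/F)$. I expect the only real obstacle to be a careful sign and index audit at this stage, in particular keeping the contribution of $\del_{t}g^{ij}$ to $\del_{t}F$ cleanly separated from the direct term $F^{ij}\del_{t}h_{ij}$; no further geometric input is required.
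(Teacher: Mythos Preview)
Your proposal is correct and follows essentially the same route as the paper: compute $\del_{t}F$ via the chain rule (picking up the $\del_{t}g$ contribution), expand $\n^{2}F$ and $\ip{\la\del_{r}}{\n F}$ to arrive at the master relation $\cL F = F^{ij}\cL h_{ij} - 2(\la'/F-u)F^{ij}(h^{2})_{ij} - \tfrac{\la'}{F^{2}}F^{ab}F^{ij,kl}\n_{a}h_{ij}\n_{b}h_{kl}$, then insert \eqref{evolution h_{ij}}, cancel the concavity term, and use $F^{ij}h_{ij}=F$ to collect the remaining pieces. The paper cites \cite[Equ.~(2.1.150)]{Gerhardt:/2006} for the formula $\del F/\del g_{ij} = -F^{ik}h^{j}_{k}$, which is exactly your ``variation of $g^{ik}$ in the Weingarten operator'' term.
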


\pf{
We use $F = F(h_{ij},g_{ij})$, \autoref{lemma:evol-g-h}  and \cite[Equ.~(2.1.150)]{Gerhardt:/2006} to compute
\eq{\cL F &= F^{ij}\del_{t}h_{ij} + \fr{\del F}{\del g_{ij}}\del_{t}g_{ij} - \fr{\la'}{F^{2}}F^{ij}\n_{ij}F - \ip{\la\del_{r}}{\n F}\\
		&=F^{ij}\del_{t}h_{ij} - 2F^{ik}h_{k}^{j}h_{ij}\br{\fr{\la'}{F}-u}- \fr{\la'}{F^{2}}F^{ij}F^{kl}\n_{kl}h_{ij}\\
		&\hp{=} - \fr{\la'}{F^{2}}F^{ij}F^{kl,rs}\n_{i}h_{kl}\n_{j}h_{rs} - \ip{\la\del_{r}}{\n F}\\
		&=F^{ij}\cL h_{ij} - 2F^{ik}h_{k}^{j}h_{ij}\br{\fr{\la'}{F}-u} - \fr{\la'}{F^{2}}F^{ij}F^{kl,rs}\n_{i}h_{kl}\n_{j}h_{rs}\\
		&=-F^{ij}\left(\frac{\lambda'}{F}+u\right)g_{ij}+\frac{2}{F}F^{ij}\nabla_{i}F\nabla_{j}\br{\frac{\lambda'}{F}}\\
      &\hp{=}+ \left(u+\lambda'F +\frac{\lambda'}{F}F^{kl}(h_{rk}h^{r}_{l}+g_{kl})\right) - 2\fr{\la'}{F}F^{ik}h_{k}^{j}h_{ij}\\
      &=(1-F^{ij}g_{ij})u+\fr{\lambda'}{F}(F^{2} - F^{ij}(h^{2})_{ij}) +\frac{2}{F}F^{ij}\nabla_{i}F\nabla_{j}\br{\frac{\lambda'}{F}}. }
}

\begin{cor}\label{F-bound}
Along the flow \eqref{flow}, the curvature function satisfies the estimate
\eq{1\leq F\leq \max_{t=0}F.}
\end{cor}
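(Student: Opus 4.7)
The plan is to prove the two bounds separately using the preceding lemma. For the lower bound $F\geq 1$: the flow \eqref{flow} preserves $h$-convexity, so along the flow $\ka_{i}\geq 1$ at every point. Since $F = E_{m}/E_{m-1}$ satisfies $\del F/\del\ka_{i}>0$ on $\Ga_{+}$ and has the normalisation $F(1,\ldots,1) = 1$, monotonicity in each variable immediately yields $F\geq 1$ pointwise.

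For the upper bound I would apply the parabolic maximum principle to the evolution equation for $F$ derived in the preceding lemma. At a point where $F$ attains its spatial maximum at some time $t$, there holds $\n F=0$ and $F^{ij}\n^{2}_{ij}F\leq 0$. The former kills the last term in the formula for $\cL F$, and combined with the definition of $\cL$ the latter yields
\eq{
\fr{d}{dt}\max_{\bbS^{n}}F(\cdot,t) \leq \cL F = (1-F^{ij}g_{ij})u + \fr{\la'}{F}\br{F^{2} - F^{ij}(h^{2})_{ij}}
}
at such a point, so it remains to verify that both summands on the right are non-positive.

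For the first summand, concavity of $F$ gives $F(g)\leq F(h) + F^{ij}(g_{ij}-h_{ij})$, which combined with $F(I)=1$ and Euler's identity $F^{ij}h_{ij}=F$ rearranges to $F^{ij}g_{ij}\geq 1$. For the second summand, a direct symmetric function computation based on the identity $\sum_{i}\ka_{i}^{2}\,\del\si_{m}/\del\ka_{i} = \si_{1}\si_{m} - (m+1)\si_{m+1}$ for the unnormalised elementary symmetric polynomials $\si_{k} = \binom{n}{k}E_{k}$ yields
\eq{
F^{ij}(h^{2})_{ij} = (n-m+1)F^{2} - (n-m)\fr{E_{m+1}}{E_{m-1}},
}
and the Newton--Maclaurin inequality $E_{m+1}E_{m-1}\leq E_{m}^{2}$ bounds the right hand side below by $F^{2}$. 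Plugging both bounds into the displayed inequality closes the argument. The main obstacle, and the only genuinely non-routine step, is this last identity: the derivatives of a ratio of normalised elementary symmetric polynomials produce awkward combinatorial factors, and it is precisely the Newton--Maclaurin correction term that carries the sign needed to make the second summand non-positive.
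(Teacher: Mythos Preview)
Your proof is correct and follows the same route as the paper: apply the maximum principle to $\cL F$, using $F^{ij}g_{ij}\geq 1$ and $F^{ij}(h^{2})_{ij}\geq F^{2}$ to force $\cL F\leq 0$ at a spatial maximum. The only difference is that the paper imports these two inequalities from \cite[Cor.~2.3]{HuLiWei:04/2022}, whereas you derive them directly (from concavity and from the Newton--Maclaurin inequality respectively); your computation of $F^{ij}(h^{2})_{ij}=(n-m+1)F^{2}-(n-m)E_{m+1}/E_{m-1}$ is correct and makes the argument self-contained.
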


\pf{The lower bound follows immediately from the $h$-convexity and the monotonicity of $F$. For the upper bound,
we use the estimates from \cite[Cor.~2.3]{HuLiWei:04/2022}, which give
\eq{F^{2}\leq F^{ij}h_{ik}h^{k}_{j}\leq (n+1-m)F^{2},\q 1\leq F^{ij}g_{ij}\leq m. }
We conclude that at maximal points of $F$ we have $\cL F\leq 0$
and the result follows from the maximum principle.
}

\begin{lemma}
Along the flow \eqref{flow}, the mean curvature $H = g^{ij}h_{ij}$ evolves as follows.
\eq{
         \mathcal{L}H&=\frac{\lambda'}{F^{2}} F^{kl,pq}\nabla_{i}h_{kl}\nabla^{i}h_{pq}-n\left(\frac{\lambda'}{F}+u\right) -\fr{2\la'}{F^{3}}\abs{\n F}^{2}+\fr{2}{F^{2}}\n_{i}\la'\n^{i}F\\
      &\hp{=}+ \left(\frac{u}{F}+\lambda' +\frac{\lambda'}{F^{2}}F^{kl}(h_{rk}h^{r}_{l}+g_{kl})\right)H - 2\fr{\la'}{F}\abs{A}^{2}.
   }
\end{lemma}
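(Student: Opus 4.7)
The plan is to derive this from the tensorial evolution equation for $h_{ij}$ in \autoref{lemma:evol-g-h} by tracing with the (time-dependent) inverse metric, and the main observation is that since $\nabla g = 0$, the elliptic and transport parts of $\cL$ commute with the trace, so only the time derivative produces extra terms.

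First I would compute the commutator between $\cL$ and the trace. For the scalar $H=g^{ij}h_{ij}$, parallelism of the metric gives $g^{ij}\n^{2}_{ab}h_{ij}=\n^{2}_{ab}H$ and $g^{ij}\ip{\la\del_{r}}{\n h_{ij}}=\ip{\la\del_{r}}{\n H}$, so
\eq{
\cL H - g^{ij}\cL h_{ij} = \del_{t}H - g^{ij}\del_{t}h_{ij} = (\del_{t}g^{ij})h_{ij}.
}
Using $\del_{t}g_{ij}=2\bigl(\tfrac{\la'}{F}-u\bigr)h_{ij}$ from \eqref{evolution_g} one gets $\del_{t}g^{ij}=-2\bigl(\tfrac{\la'}{F}-u\bigr)h^{ij}$, and therefore
\eq{
\cL H = g^{ij}\cL h_{ij} - 2\br{\fr{\la'}{F}-u}\abs{A}^{2}.
}

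Next I would trace the identity \eqref{evolution h_{ij}} for $\cL h_{ij}$ with $g^{ij}$. The terms $g_{ij}$, $(h^{2})_{ij}$ and $h_{ij}$ trace to $n$, $\abs{A}^{2}$ and $H$ respectively; the symmetric expression $\n_{j}F\,\n_{i}(\la'/F)+\n_{i}F\,\n_{j}(\la'/F)$ traces to $2\n_{i}F\,\n^{i}(\la'/F)$. Expanding
\eq{
\fr{2}{F}\n_{i}F\,\n^{i}\br{\fr{\la'}{F}} = \fr{2}{F^{2}}\n_{i}\la'\,\n^{i}F - \fr{2\la'}{F^{3}}\abs{\n F}^{2}
}
via the quotient rule produces exactly the two gradient-type terms appearing in the claimed formula.

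Finally I would combine: the trace of $\cL h_{ij}$ contributes $-2u\abs{A}^{2}$, and adding the correction $-2\bigl(\tfrac{\la'}{F}-u\bigr)\abs{A}^{2}$ from the commutator step collapses to $-\tfrac{2\la'}{F}\abs{A}^{2}$, which is the last term in the statement; all remaining terms carry through unchanged. There is no serious obstacle here, the only care needed is the sign-bookkeeping in the commutator step and the use of $\n g = 0$; Codazzi is not required because we only trace, not commute derivatives of $h$.
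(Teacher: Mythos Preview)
Your proposal is correct and follows essentially the same route as the paper: compute $\del_{t}g^{ij}$ from \eqref{evolution_g}, obtain $\cL H = g^{ij}\cL h_{ij} - 2(\la'/F - u)\abs{A}^{2}$, trace \eqref{evolution h_{ij}}, and expand $\tfrac{2}{F}\n_{i}F\,\n^{i}(\la'/F)$ via the quotient rule. The only difference is expository---you make the ``$\n g=0$ so $\cL$ commutes with the trace except for $\del_{t}$'' step explicit, whereas the paper jumps directly to the commutator identity.
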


\begin{proof}
Using the evolution of $g_{ij}$, we can easily find the evolution of $g^{ij}$,
\eq{\label{evolution of g^{ij}}
    \frac{\partial}{\partial t}g^{ij}= -2g^{jk}g^{il}\left(\frac{\lambda'}{F}-u\right)h_{kl}.
}
Hence
\eq{\cL H&=g^{ij}\cL h_{ij} - 2\br{\fr{\la'}{F}-u}\abs{A}^{2}\\
		&=\frac{\lambda'}{F^{2}} F^{kl,pq}\nabla_{i}h_{kl}\nabla^{i}h_{pq}-n\left(\frac{\lambda'}{F}+u\right) \\
      	 &\hp{=}+\frac{1}{F}\left(\nabla^{i}F\nabla_{i}(\frac{\lambda'}{F})+\nabla_{i}F\nabla^{i}(\frac{\lambda'}{F}) \right)\\
      &\hp{=}+ \left(\frac{u}{F}+\lambda' +\frac{\lambda'}{F^{2}}F^{kl}(h_{rk}h^{r}_{l}+g_{kl})\right)H - 2\fr{\la'}{F}\abs{A}^{2}\\
      	&=\frac{\lambda'}{F^{2}} F^{kl,pq}\nabla_{i}h_{kl}\nabla^{i}h_{pq}-n\left(\frac{\lambda'}{F}+u\right) -\fr{2\la'}{F^{3}}\abs{\n F}^{2}+\fr{2}{F^{2}}\n_{i}\la'\n^{i}F\\
      &\hp{=}+ \left(\frac{u}{F}+\lambda' +\frac{\lambda'}{F^{2}}F^{kl}(h_{rk}h^{r}_{l}+g_{kl})\right)H - 2\fr{\la'}{F}\abs{A}^{2}.}
\end{proof}

\begin{cor}\label{H-bound}
Along the flow \eqref{flow} and up to time $t=1$, the curvature function satisfies the estimate
\eq{n\leq H\leq \fr{C(n,\rho_{-}(\Om),\max_{M_{0}}F)}{t}.}
\end{cor}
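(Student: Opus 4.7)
The lower bound $H\geq n$ is immediate, since $h$-convexity gives $\kappa_i\geq 1$ for all $i$, hence $H=\sum \kappa_i\geq n$. For the upper bound my plan is to derive a quadratic differential inequality of the form
\eq{\mathcal{L}H\leq C_1 H-C_2 H^2+C_3,}
with constants depending only on $n$, $\rho_{-}(\Om)$ and $\max_{M_0}F$, and then apply a standard parabolic maximum principle / ODE comparison to turn this into the desired $C/t$ decay on $(0,1]$.

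Starting from the evolution equation for $H$ derived in the previous lemma, the first simplification is to discard the Codazzi term: concavity of $F=E_m/E_{m-1}$ on the positive cone yields $F^{kl,pq}\n_i h_{kl}\n^i h_{pq}\leq 0$, and the $-n(\la'/F+u)$ term is also non-positive. Next I would bound the coefficient of the zeroth-order linear-in-$H$ block using Corollary~\ref{F-bound} (so $1\leq F\leq F_{\max}$), the barrier estimate \eqref{eq:bound-1} (controlling $r$, $u$, $\la$, $\la'$), and the Gårding-type pinching $F^{ij}g_{ij}\leq m$, $F^{ij}h_{ik}h^k_j\leq (n+1-m)F^2$ from \cite[Cor.~2.3]{HuLiWei:04/2022}; all of these quantities are uniformly bounded by some $C_1=C_1(n,\rho_{-}(\Om),F_{\max})$. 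The mixed gradient term is absorbed via Cauchy--Schwarz:
\eq{\fr{2}{F^2}\n_i\la'\,\n^i F\leq \fr{\la'}{F^3}\abs{\n F}^2+\fr{1}{\la' F}\abs{\n\la'}^2\leq \fr{\la'}{F^3}\abs{\n F}^2+C,}
using $\abs{\n\la'}\leq\la\leq C(\rho_{-})$; the first piece cancels half of the good term $-\fr{2\la'}{F^3}\abs{\n F}^2$, and the remainder is a harmless constant. Finally, the damping term $-\fr{2\la'}{F}\abs{A}^2$ is estimated via $\abs{A}^2\geq H^2/n$ together with $F\leq F_{\max}$ and $\la'\geq\cosh(\rho_{-}(\Om))\geq 1$, producing $-C_2 H^2$ with $C_2=C_2(n,\rho_{-}(\Om),F_{\max})$.

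With the differential inequality in hand, consider a spatial maximum $\xi_t$ of $H(\cdot,t)$. There $\n H=0$ and $\n^2H\leq 0$, and since $F^{ij}$ is positive definite, the operator $\mathcal{L}$ satisfies $\mathcal{L}H(\xi_t,t)\geq \del_t H(\xi_t,t)$. Setting $y(t)=\max_{\xi}H(\xi,t)$, the envelope $y$ is locally Lipschitz and satisfies $y'(t)\leq C_1 y(t)-C_2 y(t)^2+C_3$ in the viscosity sense. The Bernoulli ODE $\bar y'=C_1\bar y-C_2\bar y^2+C_3$ admits the ``forgetful'' supersolution $\bar y(t)=\Th(t)$ obtained from the substitution $z=1/\bar y$, which is of the form
\eq{\bar y(t)\leq \fr{C_1}{C_2\br{1-e^{-C_1 t}}}+\text{const},}
independent of the initial datum. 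Since $1-e^{-C_1 t}\geq \tfr{1}{2}\min\{C_1 t,1\}$, this is bounded by $C/t$ on $(0,1]$ with $C=C(n,\rho_{-}(\Om),F_{\max})$. Comparison then gives $y(t)\leq C/t$ as claimed.

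The main obstacle is the careful bookkeeping in the passage to the quadratic inequality: all coefficients must be extracted in a way that depends only on $n$, $\rho_{-}(\Om)$ and $\max_{M_0}F$, and in particular the cross gradient term must be absorbed rather than estimated crudely. Once this is done, the ODE argument giving the initial-value-free $1/t$ bound is routine.
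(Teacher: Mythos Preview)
Your proposal is correct and follows essentially the same route as the paper's proof: drop the concavity term, use $|A|^2\geq H^2/n$ for the quadratic damping, absorb the $\nabla F$--$\nabla\lambda'$ cross term by Cauchy--Schwarz against the good $-\tfrac{2\lambda'}{F^3}|\nabla F|^2$ piece, bound all remaining coefficients via the $F$-bound and the barrier/pinching estimates, and conclude by ODE comparison. The paper's writeup is terser (it simply notes ``Cauchy--Schwarz to absorb $\nabla F$ and first order terms in $H$'' and ``simple ODE comparison''), but your expanded bookkeeping matches it.
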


\pf{
We proceed similarly to the proof of \autoref{F-bound}.
At maximal points of $H$ we have, using $\abs{A}^{2}\geq H^{2}/n$ and the concavity of $F$,
\eq{
         \mathcal{L}H&\leq-n\left(\frac{\lambda'}{F}+u\right) -\fr{2\la'}{F^{3}}\abs{\n F}^{2}+\fr{2}{F^{2}}\n_{i}\la'\n^{i}F\\
      &\hp{=}+ \left(\frac{u}{F}+\lambda' +\frac{\lambda'}{F^{2}}F^{kl}(h_{rk}h^{r}_{l}+g_{kl})\right)H - \fr{2}{n}\fr{\la'}{F}H^{2}\\
      &\leq C - \fr{1}{nF}H^{2}\\
      &\leq C - \fr{1}{C}H^{2},
   }
   where in the last step we used \autoref{F-bound}. We have also used Cauchy-Schwarz to absorb $\n F$ and first order terms in $H$. The result again follows from a simple ODE comparison argument.
}

\section{Proof of \autoref{Stability AF} and \autoref{Stability AF-2}}\label{sec:proof}
In this section, we prove \autoref{Stability AF}. In the following proof, we take $C= C\left(n,\rho_{-}(\Om),\max_{\del\Om}F\right)$ to be a generic constant depending on the quantities mentioned.
\begin{proof}
Let $\epsilon> 0$ be such that 
\eq{
W_{m+1}(\Omega) =f_{m+1}\circ f_{m}^{-1}\left(W_{m}(\Omega)\right)+\epsilon.
}
Let $\rho_{-}(\Om)$ be the inradius of $\Om$ and pick the origin $o$ as the center of the corresponding inball.
Under the flow \eqref{flow} with initial surface $\del\Om$, $W_{m+1}(\Om_{t})$ evolves as (see \cite[Prop.~3.1]{WangXia:07/2014} for details) 
\eq{
\frac{\partial }{\partial t}W_{m+1}(\Om_{t})=\fr{n-m}{n+1}\int_{M_{t}}\br{\la'(r)\fr{E_{m-1}}{E_{m}}-u}E_{m+1},
}
where $M_{t} = \del\Om_{t}$.
We compute
\eq{
\label{pf:AF-stability-2}
\int_{0}^{\infty} \int_{M_{t}}\lambda'\left(\frac{E_{m+1}E_{m-1}}{E_{m}}-E_{m}\right)
&= \int_{0}^{\infty}\int_{M_{t}} \left(\frac{\lambda'E_{m-1}}{E_{m}}-u\right)E_{m+1}\\
&= \fr{n+1}{n-m}\int_{0}^{\infty}\frac{\partial}{\partial t} W_{m+1}(\Om_{t})\,dt\\
&=\fr{n+1}{n-m}(W_{m+1}(B)-W_{m+1}(\Omega))\\
&= -\fr{n+1}{n-m}\epsilon.
}   
In the first line of this calculation, we have used the Minkowski formula proved for example in Guan/Li \cite{GuanLi:/2015}
\begin{equation}
\int_{M_{t}} \lambda'(r)E_{m}= \int_{M_{t}}uE_{m+1}.
\end{equation} 
We have also used that $\Omega$ converges to a round ball at infinite time, $\Omega_{\infty}= B$ where \eqref{AF} holds with equality, and $W_{m}$ is preserved under the flow, $W_{m}(B)= W_{m}(\Omega)$.

Along the flow we have
\eq{-\la(r)\leq \fr{\la'(r)}{F}-\fr{\la(r)}{v}\leq \la'(r)}
and hence, using $\la\leq \la'$, 
\eq{
\label{pf:AF-stability-1}\left\lvert \frac{\lambda'(r)}{F}- u \right\rvert \leq \la'(\max_{\del\Om}r)\leq \cosh(\rho_{-}(\Om) + \log 2)\leq 2\cosh(\rho_{-}(\Om)),
}
where we used \eqref{eq:bound-1} and \cite[Thm.~1]{BorisenkoMiquel:/1999}.

Using the above bound, we want to estimate the Hausdorff distance between $M_{t}$ and $M_{0}=\del\Om$. Let $X(\xi,0)$ and $X(\xi,t)$ be two points in $M_{0}$ and $M_{t}$ respectively. Let $\gamma: [0,t] \to \mathbb{H}^{n+1}$ be a curve defined as 
\eq{\gamma(\tau)= X(\xi,\tau).
}
Then we have  due to \eqref{pf:AF-stability-1},
\begin{align*}
\mathrm{d}_{\bbH^{n+1}}(X(0,\xi), X(t,\xi))
& \leq \max_{[0,t]}\lvert \del_{\tau}\gamma \rvert t\leq 2\cosh(\rho_{-}(\Om))t.
\end{align*}
From this, we get 
\begin{equation}
\mathrm{dist}(M_{t}, \del\Om) \leq Ct, \hspace{3mm} \forall t \geq 0.
\end{equation}

From \eqref{pf:AF-stability-2} and $\la'\geq 1$, we get 
\eq{
 \int_{0}^{\8}\int_{M_{t}} \left( E_{m}- \frac{E_{m+1}E_{m-1}}{E_{m}} \right) \leq \fr{n+1}{n-m}\ep.
}
Then using \cite[Lemma 4.2]{Scheuer:03/2021} and \autoref{H-bound} we get for $\de>0$,
\eq{\label{pf:Stability AF-3}
\int_{\de}^{2\de}\int_{M_{t}} \lvert \mathring{A}  \rvert^{2} 
		& \leq C\max_{[\de,2\de]} E_{m-1} \int_{\de}^{2\de}\int_{M_{t}} \frac{E_{m+1,n1}^{2}\lvert \mathring{A}\rvert^{2}}{E_{m}}\leq \fr{C}{\de^{m-1}}\ep,
}
where we also used $E^{2}_{m+1,ij}= \frac{\partial^{2} E_{m+1}}{\partial \kappa_{i}\kappa_{j}}\geq 1$. 
Hence there exists $t_{\de}\in [\de,2\de],$ such that
\eq{\|\mr{A}\|_{L^{2}(M_{t_{\de}})}\leq C\de^{-\tfr{m}{2}}\rt\ep.}

Now put
\eq{\de = \ep^{\fr{1}{m+2}}}
to obtain
\eq{\label{eq:dist}\dist(M_{t_{\de}},\del\Om) + \|\mr{A}\|_{L^{2}(M_{t_{\de}})}\leq C\ep^{\fr{1}{m+2}}.}

In order to apply \cite[Thm.~1.2]{De-RosaGioffre:/2019}, we view $M_{t_{\de}}$ as a Riemannian submanifold of the Euclidean ball of radius $2$, which is conformal to $\bbH^{n+1}$ as in \eqref{eq:conf}. Due to \autoref{convex-conf} and furnishing the Euclidean geometric tensors by a tilde, we see that 
 $\ti M_{t_{\de}}$ is convex. Now we have to normalize $\ti M_{t_{\de}}$,
\eq{\hat M_{t_{\de}} = \br{\fr{\abs{\bbS^{n}}}{\abs{\ti M_{t_{\de}}}}}^{\fr 1n}\ti M_{t_{\de}}\equiv \ga \ti M_{t_{\de}}.}
Note that $\abs{M_{t_{\de}}}$ is controlled from above and below in terms of $\rho_{-}(\Om)$, due to the convergence of the surface area-preserving curvature flow
\eq{\fr{\del}{\del t}X = \br{\fr{\la'}{E_{1}}-u}\nu,}
which converges to a geodesic sphere with radius between $\rho_{-}(\Om)$ and $\rho_{-}(\Om) + \log 2$. Due to \autoref{area-conf} we have $\ga = \ga(n,\rho_{-}(\Om))$.  \cite[Thm.~1.2]{De-RosaGioffre:/2019} gives, provided that $\ep \leq \ep_{0}(n,\rho_{-}(\Om),\max_{\del\Om}F)$ with $\ep_{0}$ sufficiently small, a parametri\-zation
\eq{
\psi\cn \mathbb{S}^{n} &\to \hat M_{t_{\de}}\sub B_{2}(0)\sub \bbR^{n+1}
}
and a point $\cO\in \bbR^{n+1}$, such that
and $f$ satisfies the estimate
\eq{
\|\psi-\id -\cO\|_{W^{2,2}(\mathbb{S}^{n})} \leq C \|\mathring{\hat A}\|_{L^{2}(\hat M_{t_{\de}})}\leq C\|\mr A\|_{L^{2}(M_{t_{\de}})}\leq C\ep^{\fr{1}{m+2}}.
}
This implies that $\hat M_{t_{\de}}$ is Hausdorff-close to the Euclidean unit sphere, that $\ti M_{t_{\de}}$ is close to a Euclidean sphere of radius $\ga^{-1}$ and that in turn $M_{t_{\de}}$ is close to a hyperbolic sphere, with exactly the same error estimate,
\eq{\dist(M_{t_{\de}},S_{\bbH})\leq C\ep^{\fr{1}{m+2}}.}
Employing \eqref{eq:dist} finishes the proof for $\ep\leq \ep_{0}$. However, if $\ep>\ep_{0}$, the estimate is trivial due to 
\eq{\max_{\del\Om}r\leq \rho_{-}(\Om) + \log 2.}

To prove \autoref{Stability AF-2}, we reconvene at \eqref{pf:Stability AF-3} and do not estimate $\max E_{m}$ using \autoref{H-bound}, but the constant itself is now allowed to depend on $H$. Hence the factor $\de^{-m+1}$ is simply not present and in the subsequent computations we can pretend $m$ would be one. The proof can then literally be completed as above.
%
\end{proof}

%
%

\providecommand{\bysame}{\leavevmode\hbox to3em{\hrulefill}\thinspace}
\providecommand{\href}[2]{#2}

\end{document}